\newtheorem{thm}{Theorem}[section]
\newenvironment{lettertheorem}{%
  \theorem
}{%
  \endtheorem
}
 \newtheorem{lem}[thm]{Lemma}
 \newtheorem{prop}[thm]{Proposition}
 \newtheorem*{defn}{Definition}
 \theoremstyle{remark}
 \newtheorem{rem}[thm]{\sc Remark}
 \newtheorem*{ex}{\sc Example}
\title[On Pro-$p$ Cappitt Groups with finite exponent]
 {On Pro-$p$ Cappitt Groups with finite exponent}
\author[Porto]{Anderson Porto}
\author[Lima]{Igor Lima}
\address{Instituto de Ciência e Tecnologia - ICT \\ 
Universidade Federal dos Vales do Jequitinhonha e Mucuri \\ Diamantina - MG, 39100-000 \\ Brazil}
\address{ Departamento de Matem\'atica, Universidade de Bras\'ilia,
Brasilia-DF, 70910-900 Brazil }
\email{(Porto) ander.porto@ict.ufvjm.edu.br}
\email{(Igor) igor.matematico@gmail.com}
\subjclass{Primary 20E34,; Secondary 20E18}
\keywords{Generalized Dedekind groups, pro-$p$ Cappitt groups, torsion groups.}
\thanks{The first author was partially supported by FAPDF, Brazil. The second author was partially supported by DPI/UnB, FEMAT Proc. 054/2022, FAPDF, Brazil.}
\dedicatory{To Pavel Zalesskii on his 60th birthday}
\begin{document}

\maketitle

\begin{abstract}
A pro-$p$ Cappitt group is a pro-$p$ group $G$ such that $\tilde{S}(G) = \overline{ \langle L \leqslant_c G \,\mid \,\, L \ntriangleleft G \rangle}$ is a proper closed subgroup (i.e. $\tilde{S}(G) \neq G$). In this paper we prove that non-abelian pro-$p$ Cappitt groups whose torsion subgroup is closed has finite exponent. This result is a natural continuation of main result of the first author\cite{Por19}. 
We also prove that in a pro-$p$ Cappitt group its subgroup commutator is a procyclic central subgroup. Finally we show that pro-$2$ Cappitt groups of exponent $4$ are pro-$2$ Dedekind groups. These results are pro-$p$ versions of the generalized Dedekind groups studied by Cappitt (see Theorem 1 and Lemma 7 in \cite{Cap71}). \end{abstract} 

\maketitle
\section{Introduction}

Profinite groups are totally disconnected compact Hausdorff topological groups. Such groups can be seen as projective limits of finite groups (see Ribes-Zalesskii \cite{RZ10} or Wilson \cite{wilson98}).

We recall that a totally disconnected compact Hausdorff group $G$ is called a profinite torsion group (or periodic) if all of its elements are of finite order.

One motivation for studying these groups is the following open question due Hewitt-Ross (see Open Question 4.8.5b in \cite{RZ10}): Is a profinite torsion group necessarily of finite exponent?

In celebrated paper \cite{Zelm92}, Zel'manov proves that a finitely generated pro-$p$ torsion group is finite. Therefore the question above is open for infinitely generated torsion groups.

In \cite{Herf80} and \cite{Herf82}, Herfort proves that a profinite group $G$ whose order is divisible by infinitely many different primes has a procyclic subgroup with the same property. Otherwise if $G$ is a profinite torsion group then the order of $G$ is divisible by only finitely many distinct primes.

As main result in this paper we prove that a infinitely generated non-abelian pro-$p$ Cappitt group has finite exponent. This result is the profinite version of second part of Theorem 1 in \cite{Cap71}. In \cite{Por19} the first author of this paper proves the first part of the our main result (see Theorem \ref{teoA}). Also we obtained other related results of independent interest (see Theorem \ref{teoB}).

\section{Preliminaries}

The standard notation is in accordance with \cite{Por19}, \cite{RZ10} and  \cite{wilson98}.

We recall that an abstract group (finite or infinite) is called a \textbf{Dedekind group} if every subgroup is normal. As in \cite{Cap71}, for any group $G$, we define $S(G)$ to be the subgroup of $G$ generated by all the subgroups which are not normal in $G$, i.e. $S(G) = <N | \ N \leq G \ \text{and } \ N \not\trianglelefteq G >$. Note that $G$ is a Dedekind group if and only if $S(G)=\{1\}$. Also $S(G)$ is a characteristic subgroup of $G$ and the quotient group of $G$ by $S(G)$ is a Dedekind group. A group is to be a  \textbf{Cappitt group} if it satisfies $S(G)\neq G$, these groups are generalizations of Dedekind groups (see for example Kappe and Reboli \cite{kappe} or Cappitt \cite{Cap71}). 

In the profinite version, as in \cite{Por19} and \cite{PB19}, a \textbf{profinite Dedekind group} will be a profinite group in which every closed subgroup is normal. A profinite non-abelian Dedekind group will be called a \textbf{profinite Hamiltonian group}.

\begin{rem} \label{remark prociclica}
If each procyclic subgroup of a profinite group $G$ is normal, then G is a profinite Dedekind group (see Remark 1, p. 90-91 in \cite{PB19}).
\end{rem} 

A. Porto and V. Bessa \cite{PB19} classified these groups as follow

\begin{thm} A profinite group $G$ is Dedekind if, and only if, $G$ is  abelian or there exists a finite set of odd primes $J$ and a natural number $e$ such that $$G \cong Q_8 \times \tilde{E} \times \displaystyle\prod_{p \in J} \left( \displaystyle\prod_{i=1}^{e} \left( \displaystyle\prod_{m(i,p)}\,C_{p^i}\right)\right),$$ where $Q_8$ is the quaternion group of order $8,$ $\tilde{E}$ is an elementary abelian pro-$2$ group and each $m(i,p)$ is a cardinal number. In particular, if $G$ is a profinite Hamiltonian group then $G$ has finite exponent.
\label{dedekindBP}
\end{thm}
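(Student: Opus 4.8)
The plan is to prove both implications, disposing of the ``if'' direction quickly and spending the effort on the structural ``only if'' direction. For the ``if'' direction, assume $G\cong Q_8\times\tilde E\times B$ where $B=\prod_{p\in J}\prod_{i=1}^{e}\prod_{m(i,p)}C_{p^i}$ is the odd part. By Remark \ref{remark prociclica} it suffices to show that every procyclic subgroup $C=\overline{\langle g\rangle}$ is normal. Splitting $g$ according to the $2$-part $Q_8\times\tilde E$ and the odd part $B$, whose supernatural orders are coprime, a Goursat-type argument forces $C=C_1\times C_2$ with $C_1\leqslant Q_8\times\tilde E$ finite and $C_2\leqslant B$. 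Now $C_2\trianglelefteq B$ because $B$ is abelian, and $C_1\trianglelefteq Q_8\times\tilde E$ because $\tilde E$ is central and $Q_8$ is Hamiltonian, so that $Q_8\times\tilde E$ is a Hamiltonian pro-$2$ group (as one checks on finite quotients via Baer's theorem). Since the two factors commute, $C\trianglelefteq G$.

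For the ``only if'' direction I would first prove the reduction: \emph{$G$ is profinite Dedekind if and only if every finite continuous quotient $G/U$ is a finite Dedekind group}. One implication is clear since open subgroups are closed; for the other, a closed $H\leqslant G$ satisfies $H=\bigcap_U HU$ over the open normal $U$, each $HU/U$ is normal in $G/U$, and intersecting gives $gHg^{-1}=H$. Baer's classical theorem then describes every quotient: $G/U$ is abelian, or $G/U\cong Q_8\times E_U\times A_U$ with $E_U$ elementary abelian of exponent $2$ and $A_U$ abelian of odd order. Because finite Dedekind groups are nilpotent of class at most $2$, the group $G$ is pronilpotent and factors as $G=\prod_\ell G_\ell$ over its Sylow pro-$\ell$ subgroups. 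Carrying the quotient description through the inverse limit, each odd $G_\ell$ is abelian, while $G_2$ is a Hamiltonian pro-$2$ group with $\overline{[G_2,G_2]}\cong C_2$; the finite classification in the limit then identifies $G_2\cong Q_8\times\tilde E$ with $\tilde E$ elementary abelian pro-$2$.

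The main obstacle is to pin down the odd part, that is, to show that $J$ is finite and that the exponent is bounded, so that the odd Sylow subgroups are genuine products of finite cyclic groups and not inverse limits such as $\mathbb{Z}_\ell$. The crux is to establish that in the non-abelian case $G$ is torsion; granting this, Herfort's theorem \cite{Herf80} (recalled in the Introduction) shows that only finitely many primes divide the order of $G$, which yields the finite set $J$. Finally, each odd $G_\ell$ is then a torsion abelian pro-$\ell$ group, hence the union of the ascending chain of closed subgroups $G_\ell[\ell^n]=\{x\in G_\ell:x^{\ell^n}=1\}$; a Baire category argument shows one of them is open, whence $G_\ell$ has finite exponent, and the structure theorem for abelian pro-$\ell$ groups of bounded exponent gives $G_\ell\cong\prod_{i=1}^{e}\prod_{m(i,\ell)}C_{\ell^i}$. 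Assembling the $2$-part and the odd part produces the asserted isomorphism, and bounded exponent of all factors yields the ``in particular'' conclusion that a profinite Hamiltonian group has finite exponent.
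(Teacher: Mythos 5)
The first thing to note is that the paper itself does not prove Theorem \ref{dedekindBP}: it is quoted from \cite{PB19} with no argument given, so there is no internal proof to compare yours against. Judged on its own terms, your ``if'' direction and your reduction of the ``only if'' direction to finite quotients (via $H=\bigcap_U HU$) are sound, as is the passage through Baer's theorem to pronilpotency and the Sylow decomposition.

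The genuine gap sits exactly where you place it: you write that ``the crux is to establish that in the non-abelian case $G$ is torsion'' and then continue by \emph{granting} it. Nothing in your argument establishes this, and with the paper's definition of a profinite Dedekind group (every \emph{closed} subgroup normal) it cannot be established, because it is false. Take $G=Q_8\times\mathbb{Z}_3$. Every closed subgroup $H$ splits as $(H\cap Q_8)\times(H\cap\mathbb{Z}_3)$: each $h\in H$ is the commuting product of its $2$-part and its $3$-part, and both lie in $\overline{\langle h\rangle}\leqslant H$ because a procyclic group is the direct product of its Sylow subgroups. The first factor is normal in $Q_8$ and the second is central, so $H\trianglelefteq G$. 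Hence $G$ is a non-abelian profinite group all of whose closed subgroups are normal, yet it is not torsion and has infinite exponent, contradicting both the displayed isomorphism and the ``in particular'' clause. (The abstract cyclic subgroup $\langle(i,1)\rangle$ is \emph{not} normal --- this is what forces torsion in Baer's classical theorem --- but its closure $\langle i\rangle\times\mathbb{Z}_3$ is normal, which is all the profinite definition sees.) So the missing step is not an oversight you could repair: the odd factor of a profinite Hamiltonian group in this sense can be any abelian profinite group of odd order, and the finiteness of $J$, the bound $e$, and the finite-exponent conclusion all fail as stated. Your remaining machinery (Herfort for finiteness of $J$, Baire category for bounded exponent) would indeed close the argument if torsion were available, but it is not; the defect lies in the cited statement rather than in your strategy for the steps you did carry out.
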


Define $G\,'$ as the topological closure in $G$ of the abstract commutator subgroup $[G, G] = \left\langle [g,h]| g,h \in G\right\rangle$, where $[g,h]=g^{-1}h^{-1}gh$ is the commutator of the elements $g, h \in G$. The abelianization of group $G$ will be denoted by $G^{ab} = G/G'=G/\overline{[G,G]}$. If $G$ is a profinite group, $G^{ab}$ is an abelian profinite group.

\begin{defn}
    A \textbf{profinite Cappitt group} is a profinite group $G$ such that $\tilde{S}(G) = \overline{ \langle L \leqslant_c G \,|\,\, L \ntriangleleft G \rangle}$ is a proper subgroup of $G$.
\end{defn}

This profinite version was defined firstly in \cite{Por19}. For finite groups we have that $\tilde{S}(G)=S(G).$

Using Theorem \ref{dedekindBP}, Porto gave the following characterization of profinite Cappitt groups.

\begin{thm} Let $G$ be a profinite group satisfying $\{1\} \neq \tilde{S}(G) \neq G.$ Then $G$ can be expressed as direct product $\tilde{H} \times \tilde{K}$, where $\tilde{H}$ is a pro-$p$ Cappitt group for some prime $p$ and $\tilde{K}$ is a profinite Dedekind group that does not contain elements of order $p$. Such $G$ is nilpotent profinite of class at most $2$.
\label{mainPorto}
\end{thm}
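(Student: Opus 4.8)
The plan is to first isolate the decisive elementary remark forced by the definition of $\tilde{S}(G)$, then to pass to finite continuous quotients to obtain pronilpotency, and finally to read off the direct product decomposition from the Sylow structure. The first step is to record that, by definition, every closed subgroup $L\leqslant_c G$ with $L\ntriangleleft G$ satisfies $L\subseteq\tilde{S}(G)$; equivalently, any closed subgroup not contained in $\tilde{S}(G)$ is normal. From this $\tilde{S}(G)$ is visibly characteristic, and $G/\tilde{S}(G)$ is a profinite Dedekind group: a closed subgroup of the quotient has the form $M/\tilde{S}(G)$ with $\tilde{S}(G)\leqslant_c M\leqslant_c G$, and either $M=\tilde{S}(G)$, giving the trivial subgroup, or $M\not\subseteq\tilde{S}(G)$, whence $M\trianglelefteq G$ and $M/\tilde{S}(G)\trianglelefteq G/\tilde{S}(G)$.

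The main step is to prove that $G$ is pronilpotent of class at most two by examining each finite continuous quotient $G/N$, for $N$ open normal. Every closed subgroup of $G/N$ has the form $M/N$ with $N\leqslant_c M\leqslant_c G$. If $N\not\subseteq\tilde{S}(G)$, then each such $M\supseteq N$ also satisfies $M\not\subseteq\tilde{S}(G)$, so $M\trianglelefteq G$ and $G/N$ is Dedekind. If instead $N\subseteq\tilde{S}(G)$, then any non-normal $M/N$ comes from a non-normal $M$, which lies in $\tilde{S}(G)$; hence $S(G/N)\leqslant\tilde{S}(G)/N\neq G/N$ and $G/N$ is a finite Cappitt group. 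In either case, Theorem \ref{dedekindBP} for the Dedekind quotients and Cappitt's classification of finite generalized Dedekind groups (Theorem 1 in \cite{Cap71}) for the Cappitt quotients show that $G/N$ is nilpotent of class at most two. Passing to the inverse limit, $G$ satisfies $[[x,y],z]=1$, so $G$ is nilpotent of class at most two and, being pronilpotent, equals the direct product $\prod_q G_q$ of its (normal) Sylow pro-$q$ subgroups. This reduction, together with the verification that the relevant quotients are genuinely covered by the finite theory, is where I expect the main difficulty to lie.

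Finally I would extract the decomposition. Using coprimality, every closed subgroup $L\leqslant_c\prod_q G_q$ splits as $\prod_q(L\cap G_q)$ and normality is coordinatewise; combined with Remark \ref{remark prociclica} this shows that a product of Dedekind Sylow factors is again Dedekind. I claim that at most one factor is non-Dedekind: if $G_{p_1}$ and $G_{p_2}$ both admitted non-normal closed subgroups $L_1,L_2$, then $L_1\times\prod_{q\neq p_1}G_q$ and $L_2\times\prod_{q\neq p_2}G_q$ are non-normal in $G$, so both lie in $\tilde{S}(G)$, yet together they generate all of $G$, contradicting $\tilde{S}(G)\neq G$. Since $\tilde{S}(G)\neq\{1\}$ means $G$ is not Dedekind, at least one factor is non-Dedekind, so there is exactly one prime $p$ with $G_p$ non-Dedekind. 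Setting $\tilde{H}=G_p$ and $\tilde{K}=\prod_{q\neq p}G_q$ gives $G=\tilde{H}\times\tilde{K}$ with $\tilde{K}$ a profinite Dedekind group containing no element of order $p$.

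To finish, I would check that $\tilde{H}$ is itself a pro-$p$ Cappitt group by computing, again through the coprime splitting, that $\tilde{S}(G)=\tilde{S}(\tilde{H})\times\tilde{K}$: every non-normal closed subgroup of $G$ has non-normal $\tilde{H}$-component and hence lies in $\tilde{S}(\tilde{H})\times\tilde{K}$, while conversely any non-normal closed subgroup of $\tilde{H}$ times any closed subgroup of $\tilde{K}$ is non-normal in $G$, forcing $\{1\}\times\tilde{K}\subseteq\tilde{S}(G)$ and $\tilde{S}(\tilde{H})\times\{1\}\subseteq\tilde{S}(G)$. Consequently $\tilde{S}(G)\neq G$ is equivalent to $\tilde{S}(\tilde{H})\neq\tilde{H}$, and the hypothesis yields precisely that $\tilde{H}$ is a pro-$p$ Cappitt group, completing the stated structure and the bound on the nilpotency class.
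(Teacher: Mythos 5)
The paper does not actually prove Theorem \ref{mainPorto}: it quotes it from \cite{Por19}, so there is no in-text argument to compare against line by line. Your proof is correct and is in the same spirit as the auxiliary machinery the paper does develop (Lemma \ref{limitesdeS}, Lemma \ref{commutador2}, Remark \ref{quocientecappittprofinite}): reduce to finite continuous quotients $G/N$, note that each is Dedekind or a finite Cappitt group according to whether $N\subseteq\tilde{S}(G)$, import nilpotency of class at most $2$ from Theorem \ref{dedekindBP} and Cappitt's Theorem 1 in \cite{Cap71}, and then read everything off the Sylow decomposition $G=\prod_q G_q$ of the resulting pronilpotent group. The delicate points all check out: the splitting $L=\prod_q(L\cap G_q)$ of closed subgroups is legitimate because $L$ is itself pronilpotent and $G_q$ is the unique Sylow pro-$q$ subgroup of $G$; your two subgroups $L_1\times\prod_{q\neq p_1}G_q$ and $L_2\times\prod_{q\neq p_2}G_q$ really are non-normal and really do generate $G$, which forces a unique non-Dedekind Sylow factor; and the identity $\tilde{S}(G)=\tilde{S}(\tilde{H})\times\tilde{K}$ correctly converts $\tilde{S}(G)\neq G$ into $\tilde{S}(\tilde{H})\neq\tilde{H}$. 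Two small remarks: Remark \ref{remark prociclica} is not needed to see that $\tilde{K}$ is Dedekind (the coordinatewise description of normality over distinct primes already gives it, and is the honest reason, since a product of Dedekind groups at the \emph{same} prime need not be Dedekind), and it is worth stating explicitly that the hypothesis $\tilde{S}(G)\neq\{1\}$ is exactly what guarantees the existence of the non-Dedekind factor $\tilde{H}=G_p$.
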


Denote by $\mathbb{Z}_{p}$ the $p$-adic integers. Consider $|z|$ the order of an element $z$ of a group $G$. For a group $G$ we denote $tor(G) = \{x \in G\,|\, |x| < \infty \}$ the subset of the elements of torsion of $G$. As in the abstract case (see 16.2.7 in \cite{Kargapolov}), when $G$ is a nilpotent pro-$p$ group, then $tor(G)$ is a subgroup of $G$ (not necessarily closed). A main result of this paper is the following

\begin{lettertheorem} \label{teoA} Let $G$ be a non-abelian pro-$p$ Cappitt group. Then $G\,'$ is a procyclic central subgroup. Moreover, if $tor(G) \leqslant_{c} G$ then $G$ has finite exponent.
\end{lettertheorem}

\label{exampleperiodic}

\begin{rem} \label{matalogo}
The Theorem \ref{teoA} has immediate proof when $G$ is a pro-$2$ Hamiltonian group since $G\,' = C_2$ is central because $G$ has nilpotent class $2$ (see Corollary $1$ in \cite{Por19}), and also  $G$ has finite exponent by Theorem \ref{dedekindBP}. The condition to be non-abelian is necessary in the Theorem \ref{teoA} since $\mathbb{Z}_{p}$ is an abelian non-periodic group satisfying all hypothesis.
\end{rem}

Now we present some preliminaries results that will are used throughout in the proof of the main theorem.

\begin{lem} \label{limitesdeS} Let $G = \displaystyle\lim_{\longleftarrow\atop{i \in \mathcal{I}}}\,\{G_i, \phi_{ij}\}$ where $\{G_i, \phi_{ij}, \mathcal{I}\}$ is a surjective inverse system of finite groups (with discrete topology). Then $$\tilde{S}(G) = \displaystyle\lim_{\longleftarrow \atop{i \in \mathcal{I}}}\,\{S(G_i), \phi_{ij}\vert_{_{S(G_i)}} \},$$ where $\{S(G_i), \phi_{ij}\vert_{_{S(G_i)}}, \mathcal{I}\}$ is the associated surjective inverse system.
\end{lem}

\begin{proof} See Proposition $1$ in \cite{Por19}.
\end{proof}

The following facts are direct consequences of the definition of subgroups $S(G), S(L),\ S(G / J)$  and the Correspondence Theorem. In the case of abstract groups, these can be found in remark on  page 312 in \cite{Cap71} or Lemma 2.2 in \cite{kappe}. 

\begin{rem} \label{quocientecappitt}
If $G$ is an abstract group with $S(G) \neq G$ and $J$ is a normal subgroup of $G$ contained in $S(G)$, then $S(G/J) \neq G/J$, furthermore if $L$ is a subgroup of $G$ containing elements of $G \setminus S(G)$ then $S(L) \neq L$.  
\end{rem}

Analogously to the abstract case, similar results can be shown in the category of profinite groups, as follows. 

\begin{rem}\label{quocientecappittprofinite}
If $G$ is a profinite  Cappitt group (i.e. $\tilde{S}(G) \neq G$) and $J$ is a closed normal subgroup of $G$ contained in $\tilde{S}(G)$, then $\tilde{S}(G/J) \neq G/J$, furthermore if $L$ is a closed subgroup of $G$ containing elements of $G \setminus \tilde{S}(G)$ then $\tilde{S}(L) \neq L$. \end{rem}

The following result is similar to Lemma $1$ in \cite{Por19}, we will repeat the same argument for the convenience of the reader.

\begin{lem} Let $G$ be as in the Lemma \ref{limitesdeS}. If $G \neq \tilde{S}(G)$ then $G_i \neq S(G_i),\,\, \forall i \in \mathcal{I}$.

\label{commutador2}
\end{lem}

\begin{proof} Suppose that there is a $G_i$ such that $\tilde{S}(G_i)=S(G_i) = G_i$. Denote by $\phi_{i}: G \twoheadrightarrow G_i$ the continuous canonical projection of the inverse limit described in Lemma \ref{limitesdeS}. By assumption $G_i$ is generated by all its non-normal subgroups. Let $\bar{L}$ be one of these subgroups and consider $L_{i}$ its inverse image by $\phi_{i}$, clearly $L_{i}$ is a non-normal closed subgroup of $G$ containing $\ker(\phi_{i})$. Therefore $\ker(\phi_{i}) \leqslant \tilde{S}(G)$ and by Remark \ref{quocientecappittprofinite} have $G/\ker(\phi_i) \neq \tilde{S}(G/\ker(\phi_i))=S(G/\ker(\phi_i)),$ so $|S(G_i)| < |G_i|<\infty$, a contradiction. 
\end{proof}

In accordance with the notations of the previous lemmas we obtain the following.

\begin{lem} Let $G$ be a pro-$p$ Cappitt group. Then $G\,'$ is a procyclic central subgroup.

\label{commutador}
\end{lem}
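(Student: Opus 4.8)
The plan is to reduce the statement to the finite quotients of $G$ and then feed in Cappitt's classification of finite Cappitt $p$-groups. Write $G = \varprojlim_{i \in \mathcal{I}} G_i$ as a surjective inverse limit of finite $p$-groups $G_i = G/U_i$, with $U_i$ ranging over the open normal subgroups of $G$ and $\phi_i : G \twoheadrightarrow G_i$ the canonical projections, as in Lemma \ref{limitesdeS}. Since $G$ is a pro-$p$ Cappitt group we have $\tilde{S}(G) \neq G$, so Lemma \ref{commutador2} applies and yields $S(G_i) \neq G_i$ for every $i$; that is, each $G_i$ is a finite Cappitt $p$-group. This is the step that makes the whole reduction possible, and it is exactly the content of Lemma \ref{commutador2} (the abelian case is subsumed here, since then each $G_i' = \{1\}$).

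Next I would record the finite-level input. By Cappitt's results for finite groups (Theorem 1 and Lemma 7 in \cite{Cap71}), a finite Cappitt $p$-group has cyclic commutator subgroup contained in its center; hence for every $i$ the subgroup $G_i' = [G_i,G_i]$ is a finite cyclic $p$-group with $G_i' \leqslant Z(G_i)$, equivalently $[G_i', G_i] = \{1\}$.

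Now I would pass back to the limit. Because the system is surjective, $\phi_i(G') = G_i'$ for each $i$, and the closed subgroup $G' = \overline{[G,G]}$ satisfies $G' = \varprojlim_i G_i'$ (the closure of the commutator subgroup is the inverse limit of the commutator subgroups of the quotients; see \cite{RZ10}). Thus every continuous finite quotient of $G'$ is cyclic, which is precisely the statement that $G'$ is procyclic. For centrality, continuity of the commutator map together with $[G_i', G_i] = \{1\}$ for all $i$ forces $[G', G] = \{1\}$: for $x \in G'$ and $g \in G$ the projection $\phi_i([x,g]) = [\phi_i(x), \phi_i(g)] \in [G_i', G_i] = \{1\}$ for every $i$, and since the projections separate the points of $G$ we conclude $[x,g] = 1$. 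Hence $G' \leqslant Z(G)$. Alternatively, centrality is immediate from the fact, recorded in Theorem \ref{mainPorto}, that a profinite Cappitt group is nilpotent of class at most $2$.

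The main obstacle is not any single calculation but ensuring that the finite quotients are legitimately Cappitt groups, so that Cappitt's finite theorem can be applied uniformly across the whole system; this is guaranteed precisely by Lemma \ref{commutador2}. After that the argument is the routine but necessary verification that both ``cyclic'' and ``central'' are stable under the surjective inverse limit — cyclicity because a profinite group all of whose finite quotients are cyclic is procyclic, and centrality because $[\,\cdot\,,\,\cdot\,]$ is continuous and the projections are point-separating.
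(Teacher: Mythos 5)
Your proposal is correct and follows essentially the same route as the paper: write $G$ as a surjective inverse limit of finite $p$-groups, invoke Lemma \ref{commutador2} to see each $G_i$ is a finite Cappitt group, apply Cappitt's finite theorem to get each $(G_i)'$ cyclic, and recover $G' = \varprojlim (G_i)'$ procyclic from surjectivity of the system. The only cosmetic difference is that the paper gets centrality directly from the nilpotency-class-$2$ result (Corollary 1 in \cite{Por19}) rather than from your point-separation argument, which you also note as an alternative.
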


\begin{proof} Since Cappitt groups are nilpotent of class at most $2$ (see Corollary 1 in \cite{Por19}) we have $G'=\overline{[G,G]} \leqslant Z(G).$ Consider $G = \displaystyle\lim_{\longleftarrow\atop{i \in \mathcal{I}}}\,\{G_i, \phi_{ij}\}$ where $\{G_i, \phi_{ij}, \mathcal{I}\}$ is a surjective inverse system of finite $p$-groups (with discrete topology). From Lemma \ref{commutador2} it follows that $G_i \neq S(G_i),\,\, \forall i \in \mathcal{I}$. Now, if any $G_i$ is abelian we have $(G_i)\,' =\{ 1\}$. On the other side, if $G_i$ is a non-abelian $p$-group then from Theorem $1$ in \cite{Cap71}, we have $(G_i)'$ is a finite cyclic $p$-group. Note that $\phi_i(G\,') = (G_i)\,',  \forall i \in \mathcal{I}$, hence from Corollary $1.1.8$ in \cite{RZ10} we have $G\,' = \displaystyle\lim_{\longleftarrow\atop{i \in \mathcal{I}}}\,\{(G_i)', \phi_{ij}\}$ whence it follows that $G\,'$ is procyclic. 
\end{proof}

The preceding result is a generalization of the Corollary on page 314 in Cappitt \cite{Cap71}. Kappe and Reboli show that the proof given by  Cappitt was incorrect and give a new proof for this fact, this is the content of Theorem 4.2 in Kappe and Reboli \cite{kappe}.

\begin{rem} \label{problemmm}
If $G$ is a non-abelian pro-$p$ Cappitt group, then $G$ is nilpotent of class $2$ (see Corollary 1 in  \cite{Por19}).  In this case $G\,' \leqslant Z(G)$ and therefore $[[x,y],z]=1, \, \forall x, y, z \in G$. By Lemma 5.42 (i) in Rotman (see \cite{rotman}), we have that $[x^n,y]=[x,y^n]=[x,y]^n, \, \forall n \in \mathbb{Z}.$ 
\end{rem}

\begin{lem} \label{ecentral}
Let $G$ be a non-abelian pro-$p$ Cappitt group. If $a \in G \setminus \tilde{S}(G)$ has infinite order, then $a$ is central.
\end{lem}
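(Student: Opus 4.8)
The plan is to first show that the procyclic subgroup $\overline{\langle a\rangle}$ is normal in $G$, and then to exploit the class-$2$ nilpotency together with the torsion-freeness of $\overline{\langle a\rangle}$ to force the conjugation action of $G$ on $\overline{\langle a\rangle}$ to be trivial.

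First I would argue that $\overline{\langle a\rangle}\trianglelefteq G$. Indeed, if $\overline{\langle a\rangle}$ were not normal, then it would be one of the non-normal closed subgroups generating $\tilde{S}(G)$, whence $\overline{\langle a\rangle}\leqslant\tilde{S}(G)$ and in particular $a\in\tilde{S}(G)$, contradicting the hypothesis $a\in G\setminus\tilde{S}(G)$. (The same argument in fact shows that every closed subgroup containing $a$ is normal, but only the normality of $\overline{\langle a\rangle}$ will be used.) Since $a$ has infinite order, the procyclic pro-$p$ group $\overline{\langle a\rangle}$ is torsion-free, so $\overline{\langle a\rangle}\cong\mathbb{Z}_{p}$ with $a$ corresponding to a topological generator; consequently every element of $\overline{\langle a\rangle}$ is uniquely of the form $a^{\nu}$ with $\nu\in\mathbb{Z}_{p}$, where $a^{\nu}$ denotes the continuous $\mathbb{Z}_{p}$-power.

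Next, fix $g\in G$. By normality, $a^{g}=g^{-1}ag\in\overline{\langle a\rangle}$, so $a^{g}=a^{\mu}$ for a unique $\mu\in\mathbb{Z}_{p}$, and hence $[a,g]=a^{-1}a^{g}=a^{\mu-1}$. Because $G$ is nilpotent of class $2$ (Remark \ref{problemmm}), we have $[a,g]\in G'\leqslant Z(G)$, so $[a,g]$ is central and in particular fixed by conjugation by $g$. Writing out this fixed-point condition — and using that conjugation by $g$ is a continuous automorphism, hence commutes with $\mathbb{Z}_{p}$-powers — gives
\[
a^{\mu-1}=[a,g]=\bigl(a^{\mu-1}\bigr)^{g}=\bigl(a^{g}\bigr)^{\mu-1}=\bigl(a^{\mu}\bigr)^{\mu-1}=a^{\mu(\mu-1)}.
\]
Therefore $a^{(\mu-1)^{2}}=a^{\mu(\mu-1)-(\mu-1)}=1$, and since $\overline{\langle a\rangle}$ is torsion-free this yields $(\mu-1)^{2}=0$ in $\mathbb{Z}_{p}$. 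As $\mathbb{Z}_{p}$ is an integral domain, $\mu=1$, i.e. $a^{g}=a$ and $[a,g]=1$. Since $g\in G$ was arbitrary, $a\in Z(G)$, as desired.

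The main point — and the only genuinely delicate step — is the normality of $\overline{\langle a\rangle}$, which is exactly where the Cappitt hypothesis $a\notin\tilde{S}(G)$ enters; everything afterward is a short computation whose punchline is that $(\mu-1)^{2}=0$ can hold in the domain $\mathbb{Z}_{p}$ only when $\mu=1$. I note that the argument is uniform in $p$ (no separate treatment of $p=2$ is needed), and that the infinite order of $a$ is used twice: once to identify $\overline{\langle a\rangle}$ with the torsion-free group $\mathbb{Z}_{p}$, and once to pass from $a^{(\mu-1)^{2}}=1$ to $(\mu-1)^{2}=0$.
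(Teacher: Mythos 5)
Your proof is correct and follows essentially the same route as the paper's: normality of $\overline{\langle a\rangle}$ from $a\notin\tilde{S}(G)$, the identification $\overline{\langle a\rangle}\cong\mathbb{Z}_p$, and the centrality of $[a,g]$ forcing a relation $a^{(\mu-1)^2}=1$ that torsion-freeness and the integral-domain property of $\mathbb{Z}_p$ rule out. The only difference is bookkeeping — you work with the conjugation exponent $\mu$ and invoke continuity of conjugation directly, where the paper works with $\delta=\mu-1$ via $[a,g]=a^{\delta}$ and justifies $[a,g]^{\delta}=[a^{\delta},g]$ by an explicit limit of integer exponents; your write-up in fact states the final identity more cleanly than the paper's $a^{2\delta}$ (which should read $a^{\delta^{2}}$).
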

\begin{proof}
Let $a$ be an element in $G \setminus \tilde{S}(G)$ of infinite order, hence $|a|=p^{\infty}$ and $\overline{\left\langle a \right\rangle} \cong \mathbb{Z}_{p}$. If $a$ is not central in $G$ then there is $g \in G$ such that $[a,g] \neq 1$, moreover $g^{-1}ag \in \overline{\left\langle a \right\rangle}$ because $\overline{\left\langle a \right\rangle} \triangleleft G$. By Remark \ref{problemmm} we have $g^{-1}aga^{-1}=[g,a^{-1}]={[g,a]}^{-1}=[a,g] \in \overline{\left\langle a \right\rangle}$. Since $\mathbb{Z}_p$ is a free pro-$p$ group on $\{1\}$, there is a unique continuous epimorphism $\zeta: \mathbb{Z}_p \longrightarrow \overline{\left\langle a \right\rangle}$ such that $\zeta(1)=a$, therefore each element of $\overline{\left\langle a \right\rangle}$ can be written as $a^{\lambda} = \zeta(\lambda)$ for some $\lambda \in \mathbb{Z}_p$ (see similar notation in Section 4.1 on \cite{RZ10}).  Since $\overline{\left\langle a \right\rangle}$ is infinite it follows that $\zeta$ is an isomorphism (see Proposition 2.7.1 in \cite{RZ10}). In particular $1 \neq [a,g]=a^{\delta}$ for some  $\delta \in \mathbb{Z}_p$. 
Let $(n_i)_{_{i \in \mathbb{N}}}$ be a sequence of integers converging to $\delta$ on $\mathbb{Z}_{p},$ in other words choose the $n_i$ such that $\displaystyle\lim_{i \longrightarrow \infty } \, n_i= \delta$ (note that $ \overline{\left\langle a \right\rangle} \cong \mathbb{Z}_{p}$ is a complete metric space and $\overline{\mathbb{Z}} = \mathbb{Z}_{p}$). Adapting the argument from Lemma 4.1.1 in \cite{RZ10} to $\mathbb{Z}_p$ we have $\displaystyle\lim_{i \longrightarrow \infty } \,[a,g]^{n_i}= [a,g]^{\delta}.$ By  properties of limits of sequences in metric spaces together with Remark \ref{problemmm} we obtain $$[a,g]^{\delta}=\displaystyle\lim_{i \longrightarrow \infty } \,[a^{n_i},g]=\displaystyle\lim_{i \longrightarrow \infty }(a^{n_i})^{-1} \cdot \displaystyle\lim_{i \longrightarrow \infty } g^{-1} \cdot \displaystyle\lim_{i \longrightarrow \infty } a^{n_i} \cdot  \displaystyle\lim_{i \longrightarrow \infty } g=[a^{\delta},g].$$ As $G'$  is central (see Remark \ref{problemmm}) we have $[a,g]=a^{\delta} \in Z(G).$ So  $[[a,g],g]=[a^{\delta},g]=[a,g]^{\delta}=1,$ which implies that $a^{2\cdot \delta }=\zeta(2 \cdot \delta)=1,$ a contradiction because $Ker(\zeta)=\{0\}$. Therefore $a$ is central in $G.$ 
\end{proof}

\begin{prop} Let $G$ be a non-abelian pro-$p$ Cappitt group and $tor(G)$ is a closed subgroup of $G$. Then $G$ is a periodic group. 

\label{torsionprop}
\end{prop}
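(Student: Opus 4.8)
The plan is to argue by contradiction: assume $G$ is not periodic and manufacture an element of infinite order lying outside $\tilde{S}(G)$ which is \emph{non-central}, contradicting Lemma \ref{ecentral}. The whole argument rests on two sources of elements: a central element of infinite order produced from the failure of periodicity, and a non-central element produced from the hypothesis that $G$ is non-abelian; combining them multiplicatively yields the contradiction.

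First I would record that, since $tor(G)$ is assumed closed and $G$ is nilpotent, the quotient $Q = G/tor(G)$ is a torsion-free profinite group, and that an element $x \in G$ has infinite order precisely when its image $\bar{x} \in Q$ is nontrivial. Assuming $G$ is not periodic, both $\tilde{S}(G)$ and $tor(G)$ are \emph{proper} closed subgroups of $G$. Since a group is never the union of two proper subgroups, there is $a \in G$ with $a \notin \tilde{S}(G)$ and $a \notin tor(G)$; thus $a$ has infinite order, and Lemma \ref{ecentral} forces $a$ to be central, with $\overline{\langle a \rangle} \cong \mathbb{Z}_p$.

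Next, using that $G$ is non-abelian, I would fix $u,v$ with $[u,v] \neq 1$ and produce a non-central element $z \in \tilde{S}(G)$ of infinite order. If $u$ already has infinite order, then since it is non-central the contrapositive of Lemma \ref{ecentral} places it in $\tilde{S}(G)$, and I take $z = u$. If instead $u$ is torsion, I replace it by $z = au$: this is still non-central (as $a$ is central, $[au,v] = [u,v] \neq 1$ by Remark \ref{problemmm}), it has infinite order because $\bar{z} = \bar{a} \neq 1$ in $Q$, and being non-central of infinite order it again lies in $\tilde{S}(G)$.

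Finally, I would form the elements $a^k z$ for $k \in \mathbb{Z}$. Each is non-central because $a^k$ is central and hence $[a^k z, v] = [z,v] \neq 1$. The set $\{k : a^k \in \tilde{S}(G)\}$ is a subgroup $m\mathbb{Z}$ of $\mathbb{Z}$ with $1 \notin m\mathbb{Z}$, so $\mathbb{Z} \setminus m\mathbb{Z}$ is infinite; and since $\bar{a}$ has infinite order in the torsion-free group $Q$, the equation $\bar{a}^k \bar{z} = 1$ has at most one solution. Choosing $k$ outside $m\mathbb{Z}$ and avoiding that one exceptional value makes $a^k z$ simultaneously non-central, of infinite order, and outside $\tilde{S}(G)$, which contradicts Lemma \ref{ecentral}; hence $G$ is periodic. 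The main obstacle, and the step requiring the most care, is this simultaneous control: guaranteeing that a single exponent $k$ keeps the product out of $\tilde{S}(G)$ (which is why I track the subgroup $m\mathbb{Z}$ rather than merely taking $k=1$) while preserving infinite order, together with the auxiliary passage to $au$ needed to handle a torsion choice of $u$.
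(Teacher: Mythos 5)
Your proof is correct, and it turns on the same pivot as the paper's: Lemma \ref{ecentral} applied to a central element $a$ of infinite order lying outside $\tilde{S}(G)$. (Your way of producing $a$ — a group is never the union of the two proper subgroups $\tilde{S}(G)$ and $tor(G)$ — is a cleaner packaging of the paper's generation argument.) Where the two proofs diverge is the endgame. The paper takes an arbitrary non-central $w \in G \setminus \tilde{S}(G)$, notes that $aw$ and $a^2w$ are non-central of infinite order and hence both lie in $\tilde{S}(G)$ by Lemma \ref{ecentral}, and immediately gets $a = (a^2w)(aw)^{-1} \in \tilde{S}(G)$, a contradiction; thus every element of $G \setminus \tilde{S}(G)$ is central and $G$ is abelian. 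You instead manufacture one element $a^k z$ that is simultaneously non-central, of infinite order, and outside $\tilde{S}(G)$, violating Lemma \ref{ecentral} directly; this costs you the bookkeeping with $m\mathbb{Z}$, the exceptional exponent, and the passage to $Q = G/tor(G)$. The paper's two-element trick is more economical, but your version is more explicit on a point the paper elides: for $aw$ to have infinite order one needs $w$ to be torsion (which follows from Lemma \ref{ecentral} applied to $w$, but is not said), whereas your torsion-free quotient $Q$ handles all order computations uniformly. One small presentational gap: when you conclude that $k \notin m\mathbb{Z}$ forces $a^k z \notin \tilde{S}(G)$, you are using that $z \in \tilde{S}(G)$ — which you did establish earlier — but the inference deserves to be stated explicitly.
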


\begin{proof} Suppose that $G$ contains elements of infinite order. Since $G$ is nilpotent whose class is two (see Corollary $1$ in \cite{Por19}), it is generated by these elements, not all of which lie in $\tilde{S}(G)$ otherwise we would have $G= \overline{\left\langle G \setminus \tilde{S}(G) \right\rangle} =\overline{\left\langle tor(G)\right\rangle}=tor(G)$, a contradiction. Let $a$ be an element in $G \setminus \tilde{S}(G)$ of infinite order. From Lemma \ref{ecentral} $a$ is central. Now if $w$ is a non-central element of $G \setminus  \tilde{S}(G)$ then $aw, a^2w$ are non-central elements of infinite order, so both lie in $\tilde{S}(G)$. Thus $a^2ww^{-1}a^{-1}=a \in\tilde{S}(G)$,  contradicting the choice of $a$. Therefore every element of $G \setminus \tilde{S}(G)$ is central and thus $G$ is abelian since $G \setminus \tilde{S}(G)$ generates $G$, a contradiction. 
\end{proof}

\begin{ex}
Note that the condition $\tilde{S}(G) \neq G$ in the Proposition \ref{torsionprop} is necessary, for instance $G= \left( \displaystyle\prod_{r \in \mathbb{N}}\,Z_r\right) \times Q_8$ is a non-abelian pro-$2$ non-periodic group with $tor(G) = Q_8$ and $Z_r \cong \mathbb{Z}_2, \forall r \in \mathbb{N}$. We show that $\tilde{S}(G)=G$. Fix $n \in \mathbb{N}$, indeed, it is sufficient we consider the following families of procyclic subgroups of $G$  isomorphic to $\mathbb{Z}_2$, that we will denote by  $$I_n=\overline{\langle(0,\ldots, 0, 1_n, 0,  \ldots,0, i)\rangle},$$ $$J_n=\overline{\langle(0,\ldots,0,1_n,0, \ldots,0,j)\rangle},$$  
$$K_n=\overline{\langle(0,\ldots,0,1_n,0, \ldots,0,k)\rangle},$$ where  $1_n:=(0,\ldots,0,1_n,0,\ldots,1)$ is the generator of $Z_n \cong \mathbb{Z}_2$. Observe that $$(0,\ldots,0,1_n,0,\ldots,0,i)^j = (0,\ldots,0,1_n,0,\ldots,0,-i) \not \in I_n,$$ so inductively $I_n$ is not a normal subgroup of $G$. Similarly, it can be shown also that $J_n$ and $K_n$ are not normal subgroups of $G$ for each $n \in \mathbb{N}$. Therefore by the definition of $\tilde{S}(G)$ we have that $I_n, \ J_n$ and $K_n$ are contained in $\tilde{S}(G)$. Now consider the following elements of $\tilde{S}(G)$, $x_n=(0,\ldots,0,1_n,0,\ldots, i)$ and $y_n=(0,\ldots,0,1_n,0,\ldots,0,j)$. Note that $x_n \cdot y_n^{-1} = (0,\ldots,0,-k):=-k$, so $k \in \tilde{S}(G)$. Similarly $i, j \in \tilde{S}(G)$ and this implies that $Q_8$ is contained in $\tilde{S}(G)$. Since $(0,\ldots,0,1_n,0,\ldots,0,k)$ and $(0, \ldots,0,-k)$ are in $\tilde{S}(G)$ we have $1_n:=(0, \ldots, 1_n,0,\ldots,0,1)$ belongs to $\tilde{S}(G)$, and therefore $\tilde{S}(G) = G$.

\label{exampleperiodic}
\end{ex}


\section{Proof of the Theorem \ref{teoA}}

We are now ready to prove a main result of this paper.\\
 
\begin{proof} The fact that $G'$ is a procyclic central subgroup follows from Lemma \ref{commutador}. If $G$ is Dedekind (i.e. pro-$2$ Hamiltonian group) the result follows immediately from Theorem \ref{dedekindBP} and Remark \ref{matalogo}. From now on we will consider that $\tilde{S}(G)\neq \{1\}$. Since $tor(G)$ is closed in $G$, from Proposition \ref{torsionprop} follows that $G$ is a pro-$p$ torsion group. If $G$ is finitely generated as pro-$p$ group then from celebrated Zel’manov Theorem (see Theorem 4.8.5c in \cite{RZ10}) we have that $G$ is a finite $p$-group and the result follows. Therefore we can suppose that $G$ is an infinitely generated pro-$p$ and $\{1\} \neq \Tilde{S}(G) \neq G$. Also since $G$ is a torsion group and $G'$ is procyclic, it follows from Proposition 2.7.1 \cite{RZ10} that $G\,' = C_{p^n}$ for some $n \in \mathbb{N}$ fixed.  

Suppose by contradiction that $G$ has no finite exponent. So there are $p$-elements of $G$, say, $x_1, x_ 2, \ldots, x_k, \ldots, (k \in \mathbb{N})$, such that $|x_k| \rightarrow \infty$ for $k \rightarrow \infty$ (elements of unlimited order). 

Let $G^{ab}=G/\overline{[G,G]}$ be the abelianization of $G$. It is straightforward prove that $G^{ab}$ is an abelian pro-$p$ torsion group because $G$ is a torsion group.  From Theorem $4.3.8$ in \cite{RZ10} we obtain that $$G^{ab} = \prod_{m(1)} C_p \times \prod_{m(2)} C_{p^2} \times \ldots \times \prod_{m(e)} C_{p^e},$$ where each $m(i)$ is a cardinal number, $e$ is some natural number, with at least one $m(i)$ infinite because $G^{ab}$ is infinitely generated.

For each $g \in G$ we have    $(gG\,')^{p^e} = G\,'$ since $p^e$ is the finite exponent of $G^{ab}$. Therefore for all $k \in \mathbb{N}$ we have $$G\,' = (x_kG\,')^{p^e}=x_k^{p^e}G' \Longrightarrow x_k^{p^e} \in G\,'=C_{p^n}.$$ Thus we obtain  $$({x_k}^{p^e})^{p^n}= x_k^{p^{e+n}}=1.$$ Therefore the order of each $x_k$ is limited by $p^{e+n}, \forall k \in \mathbb{N},$ a contradiction. We conclude that $G$ has finite exponent. \end{proof}

\section{Pro-$2$ Cappitt groups of exponent $4$}

The following result shows that the only pro-$2$ Cappitt groups of exponent $4$ are pro-$2$ Dedekind groups. 

\begin{lettertheorem}\label{teoB} If $G$ is a pro-$2$ group of exponent $4$ then either $\tilde{S}(G)=\{1\}$ or $\tilde{S}(G)=G$.
\end{lettertheorem}

\begin{proof} Suppose that $\{1\} \neq \tilde{S}(G) \neq G$. It is clear that not every procyclic subgroup of $G$ is normal, otherwise $\tilde{S}(G) = \{1\}$ (see Remark \ref{remark prociclica}). Then consider $J = \overline{\langle x \rangle}$ a non-normal procyclic subgroup of $G$. Let $H= \overline{\langle x,y,z\rangle} \leqslant_{c} G$ such that $y \not\in N_{G}(J)$ and $z \not\in \tilde{S}(G)$. By construction and Remark \ref{quocientecappitt} we have that $H$ is a $3$-generated group of exponent $4$ with $\{1\} \neq \tilde{S}(H) \neq H$ and $\overline{\langle z\rangle} \trianglelefteq_{c} H$. Since $H$ has exponent $4$, we have that $H$ is a finitely generated pro-$2$ torsion group, it follows from celebrated Zel’manov Theorem that $H$ is a finite non-abelian $2$-group (see Theorem 4.8.5c in \cite{RZ10}). From Corollary 1 in Porto \cite{Por19} and Lemma \ref{commutador} we have that $G$ and $H$ are nilpotent groups of class $2$ with $H'$ being cyclic, so $H' \cong C_2$ or $H' \cong C_4$. Take a minimal set of generators of $H$ chosen to lie outside $\tilde{S}(H)=S(H)$, say, $\{h_1, h_2, \ldots, h_l\}$, where $l \geq 2$. Note that $H'=\left \langle [h_i, h_j]\,\mid \,i,j \in\{1,2, \ldots,l\}, i \neq j\right \rangle$ (see p. 129 in \cite{robinson}). Let $H_i=\left \langle h_i \right \rangle$  for each $i \in \{1,2, \ldots,l\}$.  Since $h_i \not \in S(H)$ we have $H_i \trianglelefteq H$ and so $[h_i, h_j] \in H_i \cap H_j$ for each $i,j \in\{1,2, \ldots,l\}$. 
Note that $[h_i, h_j]$ belongs to $H_m$ for every $i, j, m\in \{1,2,\ldots,l\}$, so $H'\leqslant H_m$ for each $m\in \{1,2,\ldots,l\}$. If $H' \cong C_4$ we have that $H'=H_1=H_2=\cdots=H_m$, a contradiction since $H$ is not abelian. Therefore $H'$ has order $2$. Note that $H$ is a finite $2$-group and it is $3$-generated with exponent $4$, so it is well-know that its order is at most $64$. A GAP computation using \cite{Gap} shows that $H$ is either Dedekind or $\tilde{S}(H)=S(H) = H$, a contradiction. 
\end{proof}

For example, a GAP \cite{Gap} check yields, the SmallGroup(16, 6), SmallGroup(27, 4) and SmallGroup(64, 28) are finite  Cappitt groups with commutator subgroup $C_2$, $C_3$ and $C_4$, respectively.

In Corollary $4.4$ in \cite{kappe}, Kappe and Reboli prove that if $G$ is a $p$-group Cappitt, then $G'$ is a finite cyclic $p$-group. Finally, in view of this result, example above,  Lemma \ref{commutador} and Theorem \ref{teoA}, we can pose the following questions.

\section{Open questions about profinite Cappitt groups}

\begin{enumerate}
    \item Are there non-abelian non-periodic pro-$p$ Cappitt groups? If $G$ is not torsion, is $G$ virtually $p$-adic?
   
    \item If $G$ is a pro-$p$ group satisfying $\{1\}\neq \tilde{S}(G) \neq G$, do we always have that $G'  \cong C_{p^n}$ for some $n \in \mathbb{N}$?
    \end{enumerate}


\subsection*{Acknowledgment}
We thank John MacQuarrie for the careful reading and suggestions.

\end{document}